 \documentclass[12pt,reqno]{amsart}

\usepackage{amssymb}

\usepackage{color}

\usepackage{amssymb}
\usepackage{amsmath}
\usepackage{amsthm}
\usepackage{times}
\usepackage{graphicx}


\usepackage{color}
\usepackage{amsthm}
\newtheorem{theorem}{Theorem}

\newtheorem{corollary}{Corollary}
\newtheorem{proposition}{Proposition}

\usepackage[curve,all]{xy}
 \xyoption{curve}
 \xyoption{color}
 \xyoption{line}
\xyoption{arc}

\theoremstyle{definition}

\newtheorem{example}{Example}

\theoremstyle{remark}
\newtheorem{remark}{Remark}

\numberwithin{equation}{section}

\newcommand{\calA}{\mathcal{A}}

\newcommand{\calC}{\mathcal{C}}

\newcommand{\calE}{\mathcal{E}}
\newcommand{\calF}{\mathcal{F}}

\newcommand{\calL}{\mathcal{L}}
\newcommand{\calM}{\mathcal{M}}

\newcommand{\calO}{\mathcal{O}}

\newcommand{\El}{\mathcal{E}l}

\newcommand{\frakS}{\mathfrak{S}}

\newcommand{\bbF}{\mathbb{F}}
\newcommand{\bbC}{\mathbb{C}}

\newcommand{\bbP}{\mathbb{P}}

\newcommand{\bbZ}{\mathbb{Z}}

\newcommand{\ev}{\textup{even}}

\newcommand{\as}{\textup{as}}

\newcommand{\sfU}{\mathsf{U}}
\newcommand{\sfH}{\mathsf{H}}
\newcommand{\sfA}{\mathsf{A}}
\newcommand{\sfC}{\mathsf{C}}
\newcommand{\sfS}{\mathsf{S}}
\newcommand{\sfT}{\mathsf{T}}
\newcommand{\sfM}{\mathsf{M}}
\newcommand{\sfI}{\mathsf{I}}
\newcommand{\sfP}{\mathsf{P}}

\newcommand{\half}{\frac{1}{2}}

\newcommand{\SL}{\textrm{SL}}

\newcommand{\GL}{\textup{GL}}

\newcommand{\Hilb}{\textup{Hilb}}
\newcommand{\Hyp}{\textup{Hyp}}
\newcommand{\Cov}{\textup{Cov}}
\newcommand{\sfHyp}{\mathsf{Hyp}}

\newcommand{\PSL}{\textup{PSL}}

\newcommand{\Sym}{\textup{Sym}}

\newcommand{\PGL}{\textup{PGL}}

\newcommand{\iso}{\textup{iso}}

\newcommand{\He}{\textup{He}}

\newcommand{\Jac}{\textup{Jac}}

\newcommand{\co}{\textup{coarse}}

\newcommand{\da}{\dasharrow}
\newcommand{\bin}{\textup{bin}}
\newcommand{\Proj}{\textup{Proj~}}

\newcommand{\E}{\textup{El}}

\renewcommand\emptyset\varnothing

\newcommand{\beq}{\begin{equation}}
\newcommand{\eeq}{\end{equation}}

\begin{document}
\title[Rational self-maps of moduli spaces]{Rational self-maps of moduli spaces}

\author{Igor V. Dolgachev}
\address{Department of Mathematics, University of Michigan, 525 E. University Av., Ann Arbor, Mi, 49109, USA}
\email{idolga@umich.edu}


\begin{abstract} We discuss some examples of geometrically meaningful self-maps of  moduli space of curves of low genus and hypersurfaces.
\end{abstract}

\maketitle

\section{Introduction} Let $\calM$ be  a moduli problem of some algebraic geometrical objects.  We are interested in finding a geometrically meaningful dominant rational  self-map of $\calM$. To be more precise, we should consider a moduli functor $\calM$ on the category of schemes over  a field $\Bbbk$ that assigns to a scheme $S$ the birationally equivalence classes of families  $f:X\to S$ whose fibers over a dense open subset of $S$ are geometric objects which we wish to classify 
(e.g. smooth projective curves of fixed genus). The coarse moduli space $\sfM^{\co}$ of our moduli functor  $\calM$ is a scheme over $\Bbbk$ such that there exists a morphism from the functor $\calM$ to the Yoneda functor  $h_{\sfM^{\co}}$ in the category of schemes over $\Bbbk$ with rational maps as morphisms. It must be universal in a certain obvious sense and define a bijection $\calM(\Bbbk) \to \sfM^{\co}(\Bbbk)$. Clearly, the coarse moduli space, if it exists, is defined uniquely up to a birational isomorphism. A self-map of $\calM$ defines a rational self-map of $\sfM^{\co}$. More technically involved is the notion of a rational moduli stack and the problem of the existence of  its non-identical dominant rational self-map. Not being on the firm technical ground  in the theory of stacks, we are not pursuing this. All the problems we will be considering in this article deal with well-known moduli problems and  have a clear geometrical meaning. A reader who is not satisfied with the rigor  of the posed problem is welcome to  clarify it. 

 We will be concerned with the moduli problems for which the coarse moduli space exists. 
Our problem becomes to construct a natural self-transformation of the functor $\calM\to \calM$ that defines a dominant rational self-map of the coarse moduli space $\sfM^{\co}$. It seems that  examples of birational self-maps  are abundant (think about your favorite moduli spaces of objects with some additional level structure, for example the moduli spaces $\overline{\calM}_{g,n}$ of $n$-pointed stable curves of genus $g$) but the problem of finding dominant rational self-maps of degree $> 1$ is much harder.

In this article we will give two examples of rational self-maps of moduli spaces $\calM_g$ of curves of genus $g\le 3$, so far all attempts to find such maps for curves of genus $g > 3$ were unsuccessful.  
 
 For simplicity, we assume that the ground field is the field of complex numbers $\bbC$.

\section{Elliptic curves}

We start with two familiar examples.  Let $\El_n$ be the moduli problem defined by families of pairs $(C,L)$ consisting of 
an elliptic curve $C$ and a line bundle $L$ on $C$ of degree $n$. More precisely, $\El_n(S)$ consists of flat proper morphisms $p \colon X\to S$ of relative dimension 1 together with an invertible sheaf $\calL$ on $X$ satisfying the following properties:
\begin{itemize}
\item  $p$ is smooth over an open dense 
subset $U$ of $S$ with elliptic curves as fibres;
\item the restriction $\calL_s$ of $\calL$ to each fibre $X_s$ is an invertible sheaf of degree $n$.
\end{itemize}

We say that two families $X\to S$ and $X'\to S$ are equivalent if there exists an open dense subset $V$ of $S$ and  an isomorphism of $V$-schemes $\phi \colon X_V = X\times_SV\to X'\times_SV$ such that $\phi^*(\calL_V) \cong \calL_V'$. Here the subscript $V$ means that we restrict the sheaf over the preimage of $V$.  

Assume $n = 3$. The existence of a coarse moduli space for the moduli problem $\El_3$ follows from the following proposition.

\begin{proposition} Let $(p \colon X\to S,\calL)$ be a family of elliptic curves defined as above and let $\calE = p_*\calL$. Then $\calE$ is a locally free sheaf of rank 3 over $S$ and there exists a birational $S$-morphism $f \colon X\to W$, where $W$ is the zero scheme of a section 
$w\in\Gamma(\calO_{\bbP(\calE)}(3))$. 
\end{proposition}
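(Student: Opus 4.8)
The plan is to run the relative linear-system construction attached to the degree-$3$ sheaf $\calL$, combining Riemann--Roch on the fibres with cohomology and base change; throughout I work over the open $U\subseteq S$ of smooth elliptic fibres, which is harmless since the assertion is birational. First I would check that $\calE=p_*\calL$ is locally free of rank $3$. For $s\in U$ the fibre $X_s$ is a smooth elliptic curve and $\calL_s$ has degree $3$; since $\deg\calL_s=3>0=2g-2$, Serre duality gives $h^1(X_s,\calL_s)=h^0(X_s,\calL_s^{-1})=0$, and Riemann--Roch yields $h^0(X_s,\calL_s)=\deg\calL_s+1-g=3$. Thus $s\mapsto h^0(X_s,\calL_s)$ is constant equal to $3$ and $R^1p_*\calL=0$, so by Grauert's theorem $p_*\calL$ is locally free of rank $3$ with formation commuting with base change. (If the degenerate fibres over $S\setminus U$ remain integral of arithmetic genus one, the same Riemann--Roch computation gives local freeness on all of $S$.)

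Next I would produce the morphism. A line bundle of degree $3=2g+1$ on an elliptic curve is very ample, so $\calL_s$ embeds $X_s$ as a plane cubic; equivalently the evaluation map $p^*\calE\to\calL$ is surjective. It therefore defines an $S$-morphism $f\colon X\to\bbP(\calE)$ with $f^*\calO_{\bbP(\calE)}(1)\cong\calL$ which on each fibre is this embedding into the plane $\bbP^2=\pi^{-1}(s)$, where $\pi\colon\bbP(\calE)\to S$ is the projection. Let $W$ be the scheme-theoretic image. Its ideal sheaf $\calI_W$ restricts to $\calO_{\bbP^2}(-3)$ on every fibre, so, using $\Pic(\bbP(\calE))=\pi^*\Pic(S)\oplus\bbZ\,[\calO_{\bbP(\calE)}(1)]$ and the seesaw principle, $\calI_W\cong\calO_{\bbP(\calE)}(-3)\otimes\pi^*M^{-1}$ for some $M\in\Pic(S)$; hence $W$ is the zero scheme of a section $w$ of $\calO_{\bbP(\calE)}(3)\otimes\pi^*M$. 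Since $f$ is a fibrewise closed immersion, it is an isomorphism of $p^{-1}(U)$ onto $W\cap\pi^{-1}(U)$, giving the desired birational $S$-morphism $X\to W$.

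The one genuinely delicate point is the twist $M$, which the statement asks to be trivial so that $w\in\Gamma(\calO_{\bbP(\calE)}(3))$. I would identify it by applying $R\pi_*$ to $0\to\calI_W\to\calO_{\bbP(\calE)}\to\calO_W\to 0$, computing $R^2\pi_*(\calO_{\bbP(\calE)}(-3))$ by relative Serre duality on the $\bbP^2$-bundle and using $R^1\pi_*\calO_W\cong R^1p_*\calO_X$; this expresses $M$ through $\det\calE$ and the Hodge bundle $p_*\omega_{X/S}$. Because the problem is birational, the clean way to finish is to shrink $S$ once more to a dense open on which $M$ is trivial, so that $w\in\Gamma(\calO_{\bbP(\calE)}(3))$ there. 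I expect this twist, together with the bookkeeping required to extend from $U$ across the singular fibres, to be the main obstacles; the rest is a routine application of base change and fibrewise very ampleness.
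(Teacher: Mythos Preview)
Your proposal is correct and reaches the same conclusion, but the route differs from the paper's in one essential step: the construction of the cubic section $w$.

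The paper obtains $w$ by a direct dimension count. After establishing that $\calE_n=p_*\calL^{\otimes n}$ is locally free of rank $3n$, it trivializes $\calE$ over an affine open $U$ with basis $x_U,y_U,z_U$, observes that the ten cubic monomials in $x_U,y_U,z_U$ all lie in the rank-$9$ module $\calE_3(U)=p_*\calL^{\otimes 3}(U)$, and hence finds a nonzero cubic relation $F_U(x_U,y_U,z_U)=0$. The local forms $F_U$ are then glued to produce $w$, and only afterwards is the morphism $f\colon X\to\bbP(\calE)$ introduced via $p^*\calE\to\calL$. This is the classical Weierstrass-form argument transplanted to degree $3$.

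You instead construct $f$ first, take the scheme-theoretic image $W$, and identify $\calI_W$ through the Picard group of $\bbP(\calE)$ and a seesaw argument. This is more abstract but perfectly valid. Your approach has the merit of making the twist $M$ visible and explicit: the local relations $F_U$ in the paper's argument glue only up to a unit (the paper says ``up to a projective linear transformation''), which is exactly the line bundle $\ker(\textup{Sym}^3\calE\to p_*\calL^{\otimes 3})$, and the paper passes over this point silently. Your proposed remedy---shrinking $S$ so that $M$ becomes trivial---is entirely adequate for a birational statement, and is in effect what the paper is doing implicitly. The dimension-count approach buys concreteness and a direct link to the classical normal forms; your approach buys a cleaner accounting of exactly which line bundle $w$ lives in.
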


\begin{proof} This is a modification of \cite[Proposition 1]{MS} on the existence of a Weierstrass form for families of elliptic curves with a section.  We use some standard properties of cohomology of a projective morphism (see \cite{Hartshorne}). Since $p$ is of relative dimension 1, for any invertible sheaf $\calF$ on $X$ of positive degree, the derived images $R^if_*\calF$ vanish for $i> 1$. The base change theorem allows us to compute the fiber of $R^1p_*\calF$ at closed points $t\in S$. We have 
$$R^1f_*\calF(t) \cong H^1(X_t,\calF\otimes \calO_{X_t}) = 0.$$  
Applying this to the sheaf $\calF = \calL^{\otimes n}$, we obtain that $R^1f_*\calL^{\otimes n}$ vanishes for $n > 0$ and 
\[
(f_*\calL^{\otimes n})(t) \cong H^0(X_t,\calL_t^{\otimes n}),
\]
where $\calL_t = i_t^*(\calL)$ and $i_t \colon X_t\hookrightarrow X$ is the closed embedding of the fiber. 

By  Riemann-Roch, the dimension of $H^0(X_t,\calL_t^{\otimes n})$ is equal to $3n$ (see \cite{MumfordL}). Thus the sheaves $\calE_n = f_*\calL^{\otimes n}$ are locally free of rank $3n$.

Taking $\calE = \calE_1$,  we have proved the first assertion.  Let us prove the remaining assertion. Let $U$ be an open affine subset 
over  which $\calE$ trivializes and 
let $x_U,y_U,z_U$ be a basis of the free $\calO(U)$-module $\calE(U)$. We find that all 10 monomials of degree 3 in $x_U,y_U,z_U$ belong to 
$\calE_3(U)$. Since $\calE_3$ is a free $\calO(U)$-module of rank 9, we get a linear relation between the monomials, hence there exists a 
nonzero cubic homogeneous form $F_U(X,Y,Z)\in \calO(U)[X,Y,Z]$ such that 
$F_U(x_U,y_U,z_U) = 0$. Taking an open cover $(U_i)_{i\in I}$ of $S$ trivializing $\calE$, we find that the restrictions of 
$F_{U_i}$ and $F_{U_j}$ to  $U_i\cap U_j$ are the same up to a projective linear transformation with coefficients in $\calO(U_i\cap U_j)$. 
Thus $(F_{U_i})_{i\in I}$ defines a section $w$ of the third symmetric power $S^3\calE$, or equivalently, a section of $\calO_{\bbP(\calE)}(3)$, where 
$\bbP(\calE)$ is the projective bundle associated with $\calE$, as defined by Grothendieck (see Hartshorne's book \cite{Hartshorne}).

By the property of adjoint functors, we have a canonical homomorphism of sheaves $p^*\calE \to \calL$ which defines an $S$-morphism $f \colon X\to \bbP(\calE)$ whose image is the subscheme $W$ of zeroes of $w$. Restricting $f$ to a smooth fibre $X_t$ we recognize a usual closed embedding of an elliptic curve into the projective plane $\bbP(\calE_t) \cong \bbP_{\kappa(t)}^2$, where $\kappa(t)$ is the residue field of $t$. It is given by the line bundle $\calL_t$ of degree $3$ on $X_t$. The image of the embedding is a cubic curve $F_t(X,Y,Z) = 0$, where $F_t(X,Y,Z)$ is obtained from 
$F_{U_i}(X,Y,Z)$, $t \in U_i$ by replacing the coefficients of $F_{U_i}$ with their images in the residue field $\kappa(t)$. This shows that $f$ is birational and a closed embedding over an open subset of $S$ over which the morphism is smooth. This proves the second assertion.
\end{proof}

We leave it to the reader to prove the following result.

\begin{corollary} The coarse moduli space $\E_{3}$ for $\El_3$ exists and is birationally isomorphic to the GIT-quotient of the projective 
space $\bbP(\bbC[X,Y,Z]_3)$ of homogeneous polynomials of degree 3 modulo the  linear group $\SL(3)$.
\end{corollary}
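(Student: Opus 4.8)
The plan is to use the Proposition to present every family in $\El_3$ by a family of plane cubic curves, and then to recognize the resulting equivalence classes as the $\SL(3)$-orbits parametrized by the GIT quotient. Set $V = \bbC[X,Y,Z]_3$, the $10$-dimensional space on which $\SL(3)$ acts through the coordinates $X,Y,Z$. Given $(p\colon X\to S,\calL)\in\El_3(S)$, the Proposition produces $\calE=p_*\calL$, locally free of rank $3$, together with a section $w\in\Gamma(\calO_{\bbP(\calE)}(3))$ cutting out a model $W$ birational to $X$ over $S$. Over a dense open $U\subseteq S$ on which $\calE$ is free, a choice of basis trivializes $\bbP(\calE)|_U\cong\bbP^2_U$ and turns $w|_U$ into a single cubic form $F$ with coefficients in $\calO(U)$; since the generic fibre is a smooth cubic, $F$ is nonzero on a dense open and defines a morphism $g_U\colon U\dashrightarrow \bbP(V)$.

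The first real point is that $g_U$ depends on the chosen basis of $\calE(U)$: a different basis differs by an element of $\GL(3)(\calO(U))$, which changes $F$ by the corresponding linear substitution together with a scalar (the determinant), and $w$ itself is only canonical up to an invertible scalar. Now the center of $\GL(3)$ acts on $V$ by a nonzero scalar, hence trivially on $\bbP(V)$, and the induced $\PGL(3)$-action on $\bbP(V)$ coincides with the $\SL(3)$-action because $\SL(3)\to\PGL(3)$ is surjective. Consequently, composing $g_U$ with the quotient morphism $\pi\colon\bbP(V)\dashrightarrow\bbP(V)/\!/\SL(3)$ removes all of this ambiguity, and the locally defined maps $\pi\circ g_{U_i}$ agree on overlaps; they glue to a rational map $\Phi_S\colon S\dashrightarrow \bbP(V)/\!/\SL(3)$ intrinsic to $(X,\calL)$. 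If $(X,\calL)\sim(X',\calL')$ via an isomorphism $\phi$ over a dense open, then $\phi$ identifies $\calE$ with $\calE'$ compatibly with $w,w'$ up to exactly the same $\GL(3)$-and-scalar ambiguity, so $\Phi_S=\Phi'_S$. Hence $\Phi$ is a well-defined natural transformation $\El_3\to h_{\bbP(V)/\!/\SL(3)}$.

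It remains to check that this transformation exhibits $\bbP(V)/\!/\SL(3)$ as the coarse moduli space up to birational isomorphism. On $\bbC$-points, $\El_3(\bbC)$ is the set of isomorphism classes of pairs $(C,L)$; embedding $C$ by $|L|$ gives a smooth plane cubic, well defined up to $\PGL(3)$, and conversely $\calO_C(1)$ recovers $L$, so $\El_3(\bbC)$ is in bijection with the set of $\PGL(3)$-orbits of smooth plane cubics. Since smooth plane cubics are GIT-stable, they form a dense open $\SL(3)$-invariant subset on which $\pi$ is a geometric quotient; thus $\Phi$ induces a bijection of $\El_3(\bbC)$ onto a dense open subset of $(\bbP(V)/\!/\SL(3))(\bbC)$. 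Finally, universality follows because $\bbP(V)/\!/\SL(3)$ is a categorical quotient and the classifying datum $g_U$ is $\SL(3)$-equivariant: any natural transformation from $\El_3$ to a scheme must, on the generic cubic family, be constant on $\SL(3)$-orbits and therefore factors uniquely through $\pi$. This yields both the existence of the coarse moduli space and the asserted birational isomorphism.

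The step I expect to be the main obstacle is the middle one: making precise that the only indeterminacy in the plane-cubic model of the Proposition is the change of basis of $\calE$ (a $\PGL(3)$-ambiguity) together with the scaling of $w$, and that this indeterminacy is matched \emph{exactly} by the $\SL(3)$-orbit relation collapsed by the GIT quotient. Everything else---the bijection on points and the categorical-quotient argument for universality---is then formal, the only care needed being that, since $\El_3$ has no fine moduli space, the universal property should be verified birationally, using the generic family of cubics as a stand-in for a universal family.
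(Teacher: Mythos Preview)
The paper does not actually prove this corollary: it is prefaced by ``We leave it to the reader to prove the following result.'' Your argument is a correct and natural completion of what the paper expects, and it follows the route the Proposition was clearly designed to set up---pass from a family in $\El_3$ to its plane-cubic model, observe that the only ambiguity is a $\GL(3)$-change of frame plus a scalar on $w$, and kill this ambiguity by passing to the GIT quotient.

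Two minor remarks. First, your discussion of the $\GL(3)$/$\SL(3)$/$\PGL(3)$ compatibility is exactly the point the paper suppresses, and you handle it correctly; it is worth noting that this is also why the paper can speak interchangeably of the $\SL(3)$-quotient and $\PGL(3)$-equivalence classes throughout. Second, your observation that $\Phi$ gives a bijection only onto a dense open of the GIT quotient (the locus of smooth cubics) is not a defect: the paper's working definition of coarse moduli space is explicitly birational (``defined uniquely up to a birational isomorphism''), and the Corollary is stated as a birational isomorphism, so matching $\El_3(\bbC)$ with the stable locus is precisely what is required.
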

 
Recall from \cite{GT} that the GIT-quotient  is isomorphic to $\bbP^1$. The isomorphism is defined by 
the  Aronhold basic invariants  $\sfS$ and $\sfT$ of degrees $4$ and $6$ of the algebra of invariants $\Sym(V(3,3)^\vee)^{\SL(3)}$, so that $\E_3 = \Proj(\Sym(V(3,3)^\vee)^{\SL(3)}) \cong \Proj(\bbC[\sfS,\sfT]) \cong \bbP^1$. 

Now we can define an example of  a self-map of $\El_3$ by using the \emph{Hessian} of a cubic polynomial. Recall that the Hessian of a 
degree $d$ homogeneous form $P\in \bbC[T_0,\ldots,T_n]_d$ is the determinant $\He(P)$ of the matrix formed by the second partial derivatives of $P$. It is a homogeneous form of degree $(n+1)(d-2)$. The map $P\to \He(P)$ is an example of a \emph{covariant} of degree $n+1$ and order 
$(n+1)(d-2)$ on the space $V(n,d): = \bbC[T_0,\ldots,T_n]_d$.  In our case $\He(F)$ is a cubic ternary form. We define the self-map  
$\sfH \colon \El_3\to \El_3$ as follows. Given a family $(p \colon X\to S,\calL)\in \El_3(S)$, we choose a  trivializing 
open affine cover $(U_i)_{i\in I}$ of the locally free sheaf $p_*\calL$ that defines a  collection of cubic forms 
$\underline{F} = (F_{U_i}(X,Y,Z))_{i\in I}$ as above. We assign to $\underline{F}$ the collection of the 
Hessians $(\He(F_i))_{i\in I}$. By the covariance of the Hessian, they are glued together to define a section $\He(w)$ of $\calO_{\bbP(\calE)}(3)$ and hence 
a family $(X' = \He(w)\to S, \calO_{X'}(1))$ from $\El_3(S)$. Note, that the Hessian of a nonzero polynomial may be equal to zero, so 
it does not define a plane cubic curve. However, in our definition of the coarse moduli space, it is not a problem.

\begin{theorem} The degree of the self-map $\sfH \colon \E_{3}\da \E_{3}$ is equal to 3.
\end{theorem}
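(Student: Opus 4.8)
The plan is to restrict the self-map to the Hesse pencil, which dominates $\E_{3}$ and is stable under the Hessian. Recall the classical fact that a general plane cubic is projectively equivalent to a member of the Hesse pencil
\[
F_{[a:b]} = a(X^3+Y^3+Z^3) + b\,XYZ, \qquad [a:b]\in\bbP^1,
\]
so that assigning to $F_{[a:b]}$ its isomorphism class defines a dominant rational map $\pi\colon \bbP^1\to \E_{3}$. Since the Hessian is a covariant, it is compatible with projective equivalence and hence with the passage to the coarse moduli space; moreover, a direct check will show that the Hessian of a Hesse cubic is again a Hesse cubic. Thus the Hessian induces a rational self-map $h\colon \bbP^1\to\bbP^1$ of the pencil, and the two constructions fit into the commutative square $\sfH\circ\pi = \pi\circ h$: indeed $\pi(h([a:b]))$ and $\sfH(\pi([a:b]))$ are both the isomorphism class of $\He(F_{[a:b]})$.

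First I would compute $h$ explicitly. Evaluating the determinant of the matrix of second partial derivatives of $F_{[a:b]}$ gives
\[
\He(F_{[a:b]}) = -6ab^2(X^3+Y^3+Z^3) + (216a^3+2b^3)\,XYZ,
\]
so that, after removing the common factor $2$, the induced map on the pencil is
\[
h\colon [a:b]\longmapsto \bigl[\,-3ab^2 : 108a^3 + b^3\,\bigr].
\]
The cubic forms $-3ab^2$ and $108a^3+b^3$ share no common linear factor: at $a=0$ the second equals $b^3\neq 0$, and at $b=0$ it equals $108a^3\neq 0$. Hence $h$ is a morphism $\bbP^1\to\bbP^1$ of degree exactly $3$, and in particular it is non-constant.

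Finally I would conclude by multiplicativity of degrees for dominant rational maps of smooth projective curves. From $\sfH\circ\pi = \pi\circ h$ one reads off $\deg(\sfH)\cdot\deg(\pi) = \deg(\pi)\cdot\deg(h)$; since $\pi$ is dominant, $\deg(\pi)\neq 0$ and cancels, leaving $\deg(\sfH) = \deg(h) = 3$ (and $\sfH$ is dominant because $h$ is non-constant and $\pi$ is dominant). The conceptual heart of the argument — and the point I would emphasize as the main obstacle to see clearly — is precisely this cancellation: the degree of $\pi$ (the degree, classically $12$, with which the Hesse pencil covers the moduli line under the action of the Hessian group) drops out, so that the degree of $\sfH$ is read off from the elementary map $h$ on the pencil parameter, and one never needs to compute $\pi$, the $j$-function of the pencil, or the Aronhold invariants $\sfS,\sfT$ of $\He(F)$. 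The only genuine computation is the Hessian determinant together with the coprimality check that forces $\deg h = 3$ rather than a smaller number; a heavier alternative would express $\sfH$ directly through $\sfS$ and $\sfT$ using the classical relations among the covariants of a ternary cubic, but the Hesse-pencil route makes this superfluous.
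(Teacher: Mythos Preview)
Your proof is correct and follows essentially the same route as the paper: restrict to the Hesse pencil, compute the Hessian explicitly there to obtain a degree-$3$ self-map of the pencil's base, and descend via the commutative square. The only difference is that the paper actually writes down the map $\pi$ via the invariants $\sfS,\sfT$ and records that it is a degree-$12$ Galois cover, whereas you correctly observe that this computation is unnecessary because $\deg(\pi)$ cancels in the relation $\deg(\sfH)\cdot\deg(\pi)=\deg(\pi)\cdot\deg(h)$.
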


\begin{proof} Let  
\beq\label{hesse}
F(t_0,t_1;X,Y,Z)= t_0(X^3+Y^3+Z^3)+6t_1XYZ = 0.
\eeq
be the Hesse pencil of plane cubic curves (see \cite{CAG}, 3.1.3). Considered as a closed subvariety $X$ of $\bbP^1\times \bbP^2$, the 
first projection $\bbP^1\times \bbP^2\to \bbP^1$ restricted to $X$ defines a family $p \colon X\to \bbP^1$ of elliptic curves. It is smooth over 
the open subset $U = \bbP^1\setminus D$, where $D$ consists of four points $[0,1], [1,a], 1+8a^3 =0$. This gives a family from $\El_{3}(\bbP^1)$ which we call the \emph{Hesse family}.

It is known that any plane cubic curve is isomorphic to one of the members of the pencil.  One  computes the invariants $\sfS$ and $\sfT$ for a cubic 
curve in the Hesse form \eqref{hesse}.   We have 
$$\sfS = t_0^3t_1-t_1^4, \quad \sfT = t_0^6-20t_0^3t_1^3-8t_1^6.$$
 The Hesse family corresponds to the map   
$$f \colon \bbP^1\to \E_{3}\cong \bbP^1, \quad [t_0,t_1]\mapsto [(t_0^3t_1-t_1^4)^3,(t_0^6-20t_0t_1^3-8t_1^6)^2].$$
It is a Galois cover of degree 12  with the Galois group isomorphic to the alternating group $\mathfrak{A}_4\cong \PSL(2,\bbF_3)$ (see \cite{CAG}, 3.1.3). 

The explicit computation of the Hessian for the curve given by equation \eqref{hesse} 
gives  
$$\He(F(t_0,t_1;X,Y,Z)) = t_0t_1^2(X^3+Y^3+Z^3)-(t_0^3+2t_1^3)XYZ = 0.$$
This defines a degree 3 self-map of the base of the Hesse family, and hence a degree 3 self-map of $\E_{3}$. In coordinate-free approach, the Hessian covariant assigns to a plane cubic curve $C$ a pair $(\He(C),\epsilon)$, where $\epsilon $ is one of three non-trivial $2$-torsion divisor classes on $C$. The composition with the forgetting map  $(\He(C),\epsilon)\mapsto C$ is our self-map of degree 3.
\end{proof}

\begin{remark} The base of the Hessian family is naturally identified with the modular curve $X(3)$ representing the  fine
 moduli space $\bar{\sfA}_{1,3}$ of stable abelian curves with level $3$ structure. We have a natural rational transformation of the corresponding 
 moduli problems $\calA_{1,3}\to \El_3$. However, since there are families in $\El_3(S)$ which do not admit a section, 
 the map of functors is not surjective. The same construction defines a self-map of $\sfA_{1,3}$ of degree 3.
 \end{remark}

Next we assume that $n = 2$. We consider the families $p \colon X\to S$ of elliptic curves as above 
together with a line bundle $\calL$ of degree 2. We show, as above, that $\calE = p_*\calL$ is a rank 2 locally free sheaf  
and $p^*\calE \to \calL$ defines  $S$-morphism $f \colon X\to \bbP(\calE)$ of degree 2. The Stein factorization of $f$ is the 
composition of a birational $S$-morphism $\tau \colon X\to X'$ and a finite map of degree 2 $f' \colon X'\to \bbP(\calE)$ ramified over the 
zero subscheme of a section of $\calO_{\bbP(\calE)}(4)$. Locally, over an open affine subset $U$ of $S$, the family $X'$ is 
given by the equation $z_U^2+F_U(x_U,y_U) = 0$, where $x_U,y_U$ are local sections of $\calE$ and $z_U$ is a local section of 
$p_*\calL^{\otimes 2}$. The polynomial $F_U$ here is a homogeneous polynomial of degree 4 with coefficients in $\calO(U)$.

Let $\El_{2}$ be the moduli problem defined by the families from above. We leave to the reader the proof of the following.

\begin{proposition} The coarse moduli space $\E_2$ exists and is birationally isomorphic to the GIT-quotient 
of $\bbP(V(2,4))$ by the group $\SL(2).$
\end{proposition}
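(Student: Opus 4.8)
The plan is to follow verbatim the pattern of the Corollary for $\El_3$, replacing plane cubics by the branch quartics of the degree-two double covers. The construction preceding the statement already produces, from a family $(p\colon X\to S,\calL)\in\El_2(S)$ and a trivializing affine cover $(U_i)_{i\in I}$ of $\calE=p_*\calL$, a collection of binary quartics $F_{U_i}(x,y)\in\calO(U_i)[x,y]_4$; on overlaps $U_i\cap U_j$ these differ by a substitution in $\GL(2,\calO(U_i\cap U_j))$ coming from the change of basis of $\calE$, together with the unit rescaling of the local section $z_{U_i}$ of $p_*\calL^{\otimes 2}$. Thus $(F_{U_i})_{i\in I}$ glues to a section of $\calO_{\bbP(\calE)}(4)$, well defined up to projective equivalence and scaling. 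Over the dense open where $p$ is smooth this assigns to each closed point $t$ the class of $F_{U_i}(t)$ in the GIT-quotient $Q:=\bbP(V(2,4))/\!\!/\SL(2)$, and I would check that this is natural in $S$, giving a morphism from the functor $\El_2$ to $h_Q$.

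Next I would verify that this morphism induces a bijection $\El_2(\bbC)\to Q(\bbC)$, which by the general formalism of coarse moduli spaces (exactly as invoked for $\E_3$) yields the universal property and the birational isomorphism $\E_2\cong Q$. The key is the double-cover dictionary. Given $(E,L)$ with $\deg L=2$, Riemann--Roch gives $h^0(E,L)=2$, so the complete linear system $|L|$ defines a degree-two morphism $\phi_L\colon E\to\bbP(H^0(E,L)^\vee)\cong\bbP^1$; by Riemann--Hurwitz its branch divisor consists of four points, i.e. the zero scheme of a binary quartic, canonically attached to $(E,L)$ up to the $\PGL(2)$ acting on the target. Conversely a binary quartic $F$ with distinct roots yields the smooth genus-one curve $z^2=F(x,y)$ together with the pullback of $\calO_{\bbP^1}(1)$, a degree-two line bundle, and projectively equivalent quartics give isomorphic pairs. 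These two constructions are mutually inverse on isomorphism classes.

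The point that requires the most care, and where I expect the main obstacle, is the reconciliation of dimensions. A naive count would make $\El_2$ two-dimensional, one parameter for $E$ and one for $L\in\Pic^2(E)\cong E$, whereas $Q$ is a curve. The resolution is that here an \emph{elliptic curve} is a genus-one curve with no marked origin, so its automorphism group contains all translations; since multiplication by $2$ is surjective on $E$, the translations act transitively on $\Pic^2(E)$, and hence every pair $(E,L)$ is isomorphic, as a pair, to any other $(E,L')$. Thus the isomorphism class of $(E,L)$ depends only on $j(E)$, matching the fact that the $\PGL(2)$-class of the branch quartic is recorded precisely by its cross-ratio, equivalently by $j(E)$. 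One must also make sure the correspondence respects the equivalence of families (isomorphisms over a dense open preserving $\calL$), so that the bijection on points genuinely comes from a morphism of functors; granting this, the universal property follows as for $\E_3$.

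Finally I would identify $Q$ explicitly. The algebra of $\SL(2)$-invariants of binary quartics is the free polynomial algebra $\bbC[g_2,g_3]$ on the classical invariants of degrees $2$ and $3$, so $Q=\Proj\bbC[g_2,g_3]\cong\bbP^1$, the identification with $\E_2$ being the $j$-line. This simultaneously confirms the existence of the coarse moduli space and establishes the asserted birational isomorphism.
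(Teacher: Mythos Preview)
The paper does not give a proof of this proposition; it explicitly leaves it to the reader, just as it did for the analogous Corollary for $\El_3$. Your proposal is exactly the argument the paper is gesturing at: carry over the $\El_3$ pattern verbatim, using the local binary quartics $F_U$ produced in the paragraph immediately preceding the statement in place of the ternary cubics, and invoke the double-cover dictionary to see the bijection on $\bbC$-points. Your observation that translations act transitively on $\Pic^2(E)$ (so the datum of $L$ contributes no moduli) is the one point that deserves to be made explicit, and you handle it correctly; the rest is a straightforward transcription of the $\El_3$ case.
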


Recall that the GIT-quotient from above is isomorphic to $\bbP^1$. The isomorphism is defined by a free basis of the ring of 
invariants $\Sym(V(2,4)^\vee)^{\SL(2)}$ defined by invariants $\sfI_2$ and $\sfI_3$ of degree $2$ and $3$. 

To define a self-map of $\E_{2}$ we use again the Hessian covariant of binary quartics.

\begin{theorem} The degree of the self-map $\E_{2}\to \E_{2}$ defined by the Hessian covariant is equal to 2.
\end{theorem}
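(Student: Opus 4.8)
The plan is to imitate the proof of the genus-$3$ theorem above: exhibit a one-parameter family (a pencil) of binary quartics whose moduli map to $\E_2$ is dominant and which is stable under the Hessian covariant, reduce the self-map $\sfH\colon\E_2\to\E_2$ to an explicit self-map of the pencil base $\bbP^1$, and then extract the degree from the multiplicativity of degrees in a commutative square. This is exactly the mechanism used for $\E_3$, where the Hesse pencil did the work; here I need its binary-quartic analogue.

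Concretely, I would take the pencil
\beq\label{biquad}
F(t_0,t_1;X,Y) = t_0(X^4+Y^4)+6t_1X^2Y^2,
\eeq
chosen for its symmetry. Let $\sfI_2$ and $\sfI_3$ be the degree-$2$ and degree-$3$ generators of the invariant ring, so that the moduli map is $f\colon\bbP^1\to\E_2\cong\bbP^1$, $[t_0,t_1]\mapsto[\sfI_2(F)^3,\sfI_3(F)^2]$, the cube and the square being inserted so that the two weights agree. A direct substitution gives $\sfI_2(F)=t_0^2+3t_1^2$ and $\sfI_3(F)=t_1(t_0^2-t_1^2)$ up to nonzero scalars, so the two coordinates of $f$ are the coprime sextics $(t_0^2+3t_1^2)^3$ and $t_1^2(t_0^2-t_1^2)^2$; in particular $f$ is nonconstant, hence dominant, which is all I will need from it.

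Next I would compute the Hessian of $F$. Writing out $F_{XX}F_{YY}-F_{XY}^2$ yields, up to the overall constant $144$,
\beq\label{hessbiquad}
\He(F)=t_0t_1(X^4+Y^4)+(t_0^2-3t_1^2)X^2Y^2,
\eeq
which again lies in the pencil \eqref{biquad}, with parameter $[t_0',t_1']=[6t_0t_1,\,t_0^2-3t_1^2]$. Thus the Hessian induces the self-map $h\colon\bbP^1\to\bbP^1$, $[t_0,t_1]\mapsto[6t_0t_1,\,t_0^2-3t_1^2]$, of the base, whose two coordinate forms are coprime quadratics, so $\deg h=2$. By construction $\He(F_t)=F_{h(t)}$ up to a scalar, whence $\sfH\circ f=f\circ h$ as rational maps $\bbP^1\to\E_2$. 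Taking degrees and cancelling the nonzero factor $\deg f$ gives $\deg\sfH=\deg h=2$.

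The one step that requires genuine thought — the analogue of the only real choice in the genus-$3$ argument — is selecting a pencil that is simultaneously dominant over $\E_2$ and carried into itself by the Hessian covariant; once such a pencil is found the rest is mechanical. The minor check is that $\He(F)$ is not identically zero on the pencil, as \eqref{hessbiquad} shows, so that $h$ is a bona fide degree-$2$ morphism and the commutative square is valid on a dense open set. One could also record the coordinate-free interpretation, parallel to the $\E_3$ case, in which the Hessian covariant attaches to a degree-$2$ pair $(C,\calL)$ an extra $2$-torsion datum and the self-map is the composition with the forgetful map, but this is not needed for the degree count.
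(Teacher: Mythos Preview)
Your proof is correct and follows essentially the same route as the paper: both use the pencil $t_0(X^4+Y^4)+6t_1X^2Y^2$, compute the invariants $\sfI_2=t_0^2+3t_1^2$ and $\sfI_3=t_1(t_0^2-t_1^2)$ to exhibit the dominant map $f\colon\bbP^1\to\E_2$, verify that the Hessian sends the pencil to itself via a degree-$2$ self-map $h$ of the base, and conclude $\deg\sfH=2$. Your treatment is in fact slightly more explicit than the paper's, which simply asserts that the degree-$2$ map on the base ``descends'' to a degree-$2$ self-map of $\E_2$; you spell out the commutative square $\sfH\circ f=f\circ h$ and the cancellation of $\deg f$, which is exactly what underlies that descent.
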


\begin{proof} We use the following canonical form of a binary quartic 
$$F(\alpha,X,Y) = X^4+6\alpha X^2Y^2+Y^4.$$
It is known that any binary form without multiple roots is equivalent under $\GL(2)$ to some $F(\alpha,X,Y)$. It is know that the algebra of $\SL(2)$-invariants of binary forms of degree 4 is generated by invariants $I_2$ and $I_3$ of degrees $2$ and $3$. The values of  
these invariants  on $F(\alpha,X,Y)$ are equal to $1+3\alpha^2$ and $\alpha-\alpha^3$, respectively. We have 
$$\He(F) = \alpha X^4+(1-4\alpha^2)X^2Y^2+\alpha Y^4.$$  Consider a curve $W$ in $Q= \bbP^1\times \bbP^1$ given by  the equation of 
bidegree $(1,4)$:
\beq\label{eq1}
t_0(X^4+Y^4)+6t_1X^2Y^2 = 0.
\eeq
Let $\pi \colon X\to Q$ be the  double cover of degree 2 ramified along the curve $W$. The composition of $\pi$ and the first projection 
$Q\to \bbP^1$ defines a family of elliptic curves over $\bbP^1$. Its fiber over a point $[a_0,a_1]$ is isomorphic to the 
double cover of $\bbP^1$ ramified over the zero scheme of the binary quartic $a_0(X^4+y^4)+6a_1X^2Y^2$. It is a smooth curve when the quartic has no multiple roots. 

Let $\bbP^1\to \E_{2}\cong \bbP^1$ be the map defined by $[t_0,t_1]\mapsto [(t_0^2+3t_1^2)^3, (t_0^2t_1-t_1^3)^2].$ 
This is a Galois cover of degree 6 with the Galois group isomorphic to $\frakS_3$. The Hessian covariant defines a self-map of 
$\bbP^1$ of degree 2 that descends to a degree 2 self-map of $\E_{2}$.
\end{proof}

\begin{remark}  A very interesting paper of Tom Fisher \cite{Fisher} introduces a generalization of a Hesse pencil of plane cubics to the case of elliptic normal  curves  of degree 4 and 5.  The author also extends to these cases the notion of the Hessian covariant. His calculations show that there exist self-maps of the moduli problems $\El_{4}$ and 
$\El_5$ of degrees $5$ and $11$, respectively. Both of the moduli problems admit  coarse moduli spaces based on  well-known models of elliptic normal curves of degrees 4 and 5.
\end{remark}

\section{Hypersurfaces} 

The moduli problem $\mathcal{H}yp_{d,n}$ of hypersurfaces of degree $d$ in $\bbP^n$ assigns to a scheme $S$ over a field $\Bbbk$ the set of  closed subschemes of $S\times \bbP^n$ that are flat over $S$ and whose fibers over points $s\in S$ are hypersurfaces of degree $d$ in $\bbP_{\Bbbk(s)}^n$. It admits a fine moduli space represented by a closed subscheme $\Hyp_{d,n}$ of the Hilbert scheme $\Hilb(\bbP^n)$.

Recall that  a \emph{covariant} of degree $r$ and order $m$ on the space $V(n,d)$ of homogeneous forms of degree $d$ in $n$ variables is 
 a $\SL(n)$-invariant map $C_{r,m} \colon V(n,d)\to V(n,m)$ given by  homogeneous polynomials of degree $r$ in the coefficients of a general 
polynomial in $V(n,d)$.
We will identify  a covariant with the corresponding rational $\PGL(n)$-equivariant map $$C_{r,m} \colon \bbP(V(n,d))\da \bbP(V(n,m)).$$ It defines a self-map of the moduli problem $\mathcal{H}yp_{d,n-1}$ in an obvious manner. A covariant is called \emph{dominant} if 
the rational map is dominant. We are interested in examples of dominant covariants. 

Similarly, one defines the notion of a \emph{contravariant} of degree $r$ and class $m$ on the space $V(n,d)$. It is a $\SL(n)$-equivariant polynomial map $V(n,d)\to V(n,m)^\vee$ given by  homogeneous polynomials of degree $r$ in the coefficients of a general 
polynomial in $V(n,d)$. If we view  $V(n,m)$ as the symmetric power $\Sym^d(E)$, where $E = \bbC^n$, then 
$V(n,m)^\vee = \Sym^d(E^\vee)^\vee = \Sym^d(E)$. By fixing a basis in $E$ and the dual basis in $E^\vee$, a covariant defines a rational self-map of the moduli problem $\mathcal{H}yp_{d,n-1}$. Again we are interested in dominant contravariants.

The group $\PGL(n)$ acts naturally on the functor $\mathcal{H}yp_{d,n-1}$ and the GIT-quotient $\Hyp_{d,n-1}/\!/\PGL(n)$ is the coarse moduli space of the moduli problem $\mathcal{H}yp_{d,n-1}/\PGL(n)$.

In this section we will be interested in the case $n = 2$. We denote by $\sfM_{\bin}^d$ the coarse moduli space for the moduli problem $\mathcal{H}yp_{d,1}/\PGL(2)$. We use covariants and contravariants to search for dominant self-maps of this moduli space. One of the standard ways to define a covariant uses the theory of transvectants. In modern form, this is the Clebsch-Gordan decomposition of the linear representations of $\SL(2)$:
\[
V(2,d)\otimes V(2,d')\cong \bigoplus_{k=0}^{[\half(d+d')]} V(2,d+d'-2k).
\]
One can give the projection map $\tau_k$ to the factor $V(2,d+d'-2k)$  by an explicit formula using the $k$th transvectant (see \cite{GT}, p. 68).

One constructs a new covariant  $C$ of degree $r+r'$ and order $m+m'-2k$ from known two covariants $C_{r,m}$ and $C_{r',m'}$ by 
considering the map 
$V(2,d)\to V(2,d)\otimes V(2,d'),\  B\mapsto C_{r,m}(B)\otimes C_{r',m'}(B)$ and composing it with the $k$-th transvectant $\tau_k$. For example, 
taking $k= 1$, we get the Jacobian determinant $B\mapsto J(C_{r,m}(B),C_{r',m'}(B))$. 

In particular, 
if $d = 2k$, we can take $m=m'=d$ and $C_{r,m} = C_{r',m'} = \sfU$, the identity covariant, and compose it with $\tau_k$ to obtain a covariant $V(2,d)\to V(2,d)$. If $d= 4$, it coincides with the Hessian covariant.

It is known that the  set of covariants of given order $m$ on $V(n,d)$ generate a finitely generated module $\Cov_{n,d}(m)$ over the ring of invariants $\Cov_{n,d}(0) = \Sym(V(n,d)^\vee)^{\SL(n)}$. Its generators are called the \emph{basic covariants}. It follows from their irreducibility that, when $m=d$ they define a non-identical rational map of the GIT-quotient $\bbP(V(n,d))/\!/\PGL(n)$.

We will be interested in covariants of order $m = d$.  Considering families from $\calM_{\bin}^d$ as maps $S\to (\bbP^1)^{(d)}$ and composing
these maps with the map $C_{r,d}$ we define a rational map $\sfM_{\bin}^d\da \sfM_{\bin}^d$. Provided the map is of finite degree, we get in this  way a self-map of the moduli space.
We have already considered an example of such a self-map for binary forms of degree 4. The next case is $d = 5$. 

In the case $d = 5$, the module $\Cov_{r,5}(5)$ is generated by three covariants $C_{1,5}, C_{3,5},$ and $C_{7,5}$ (see \cite{Elliott}, p. 235). The first covariant is the identity map $\sfU \colon V(2,5)\to V(2,5)$. The second one is $J(\sfU,C_{2,2})$, where  
$C_{2,2}$ is the $4$th transvectant. The third covariant is $J(\sfU,C_{6,2})$, where $C_{6,2}$ is a certain quadratic covariant of degree 6. 

We will be interested in the self-map defined by the covariant $C_{3,5}$. To compute it we use that a general binary quintic can be written in the \emph{Hammond form} \cite{Elliott}, p. 297:
$$B = at_0^5+bt_1^5+et_0^4t_1+ft_1^5.$$
This means that its orbit can be represented by a form with zero coefficients at {\small $t_0^3t_1^2, t_0^2t_1^3$. The value of $C_{3,5}$  at such a binary form is equal to
$$
C_{3,5}(B)=(af-5be)at_0^5+(5af-9be)bt_0^4t_1+8b^2ft_0^3t_1^2-8ae^2t_0^2t_1^3$$
$$+(5af-9be)et_0t_1^4-(af-5be)ft_1^5.$$
It defines a rational map $C:\bbP^3\da \bbP^5$ given by 
$[a,b,e,f]\mapsto [x_1,x_2,x_3,x_4,x_5,x_6]$, where $x_i$ is the coefficient of $C_{3,5}(B)$ at $t_0^it_1^{5-i}$. We claim that this map is of degree 1 onto its image. In fact, we have 
$ax_6 -fx_1 = 0,\quad bx_5-ex_2 = 0$ that shows that the pre-image of a general point $[x_1,x_2,x_3,x_4,x_5,x_6]$ is contained the  line $ax_6 -fx_1 = bx_5-ex_2 = 0 = 0.$ Restricting the map to this line, we find that it is of degree one onto a plane cubic. This shows that the pre-image of a general point consists of one point. Let 
$\Phi:\bbP^3\da \sfM_{\bin}^d$ be the rational map defined by the basic invariants $I_4,I_8,I_{12}$ of binary quintics (see \cite{GT}, \cite{Elliott})). Since  $C$ is given by a covariant, we have a commutative diagram
$$\xymatrix{\bbP^3\ar@{^{(}->}[r]\ar@{-->}[dr]&\bbP^5\ar@{-->}[r]^{C_{3,5}}\ar@{-->}[d]^\Phi&\bbP^5\ar@{-->}[d]^\Phi\\
&\sfM_{\bin}^5\ar@{-->}[r]^{\bar{C}_{3,5}}&\sfM_{\bin}^5}
$$
Let us show that the rational self-map $\bar{C}_{3,5}$ of the moduli space $\sfM_{\bin}^d$    is  of degree 1. In fact, suppose that the  pre-image of a general point $x$ in  $\sfM_{\bin}^d$ contains two distinct points. These two points  define two binary forms  $B,B'$ in $V(2,5)$ from two different orbits. They are sent under the map $C_{3,5}$ to two different binary forms $C_{3,5}(B), C_{3,5}(B')$  in $\bbP(V(2,5)) \cong \bbP^5$ with the same images under the map $\Phi$. This means  that they belong to the same orbit. Thus there exists $g\in \SL(2)$ such that 
$g(C_{3,5}(B)) = C_{3,5}(B')$. Since we are dealing with a covariant, we have $g(C_{3,5}(B)) = C_{3,5}(g(B))$. Thus $g(B)$ and $B'$ are in the same orbit, contradicting the assumption.

Let us record what we have found.

\begin{theorem} The coarse moduli space $\sfM_{\bin}^5$  of binary sextics admits a non-trivial birational self-map. 
\end{theorem}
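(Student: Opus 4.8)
The plan is to produce the required self-map as the one induced by the covariant $C_{3,5}$, and then to verify that it is both birational and non-identical, so that the statement follows from the construction already set up. Because $C_{3,5}$ is a covariant, the associated rational map $\bbP(V(2,5))\da\bbP(V(2,5))$ is $\PGL(2)$-equivariant, and hence it descends to a rational self-map $\bar{C}_{3,5}$ of the GIT-quotient $\sfM_{\bin}^5$; this is exactly the content of the commutative diagram above. It then remains to check that $\bar{C}_{3,5}$ has degree $1$ and is not the identity.

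First I would establish generic injectivity of $C_{3,5}$ at the level of binary forms. A general $\PGL(2)$-orbit has a representative in Hammond form $B=at_0^5+bt_0^4t_1+et_0t_1^4+ft_1^5$, i.e.\ with vanishing coefficients at $t_0^3t_1^2$ and $t_0^2t_1^3$, and the explicit expression for $C_{3,5}(B)$ computed above realizes $C_{3,5}$ as a rational map $C\colon\bbP^3\da\bbP^5$. The identities $ax_6-fx_1=0$ and $bx_5-ex_2=0$ show that the fibre of $C$ over a general point $[x_1,\dots,x_6]$ is contained in a line, and restricting $C$ to that line one checks it is of degree $1$ onto a plane cubic. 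Thus the general fibre of $C$ is a single point, so $C_{3,5}$ is generically injective on a dense set of orbit representatives.

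Next I would transfer this to the quotient. Suppose a general $x\in\sfM_{\bin}^5$ had two distinct preimages under $\bar{C}_{3,5}$, arising from orbits $[B]\neq[B']$. Then $\Phi(C_{3,5}(B))=\Phi(C_{3,5}(B'))=x$, and since $\Phi$ separates general orbits, $C_{3,5}(B)$ and $C_{3,5}(B')$ lie in one orbit, say $g\cdot C_{3,5}(B)=C_{3,5}(B')$ with $g\in\SL(2)$. Covariance yields $C_{3,5}(g\cdot B)=C_{3,5}(B')$, and generic injectivity forces $g\cdot B=B'$, i.e.\ $[B]=[B']$, a contradiction. Hence $\bar{C}_{3,5}$ is birational. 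For non-triviality, $C_{3,5}$ is a basic covariant of order $5$ different from the identity covariant $\sfU=C_{1,5}$, and the explicit formula shows $C_{3,5}(B)$ is not proportional to $B$ for general $B$, so the induced map is not the identity.

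I expect the birationality computation of the second step to be the main obstacle: one must be confident that the Hammond slice genuinely parametrizes a dense set of $\PGL(2)$-orbits and that the degree-$1$ claim on the residual line is correct, since the entire descent argument rests on the generic injectivity of $C_{3,5}$.
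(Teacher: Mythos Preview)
Your proposal mirrors the paper's argument essentially step for step: reduce to the Hammond normal form, verify that the resulting map $C\colon\bbP^3\da\bbP^5$ is degree one onto its image via the relations $ax_6-fx_1=0$ and $bx_5-ex_2=0$, and then invoke covariance of $C_{3,5}$ to descend injectivity to $\sfM_{\bin}^5$ by the same orbit argument. Your closing remark on non-triviality is the only point the paper leaves implicit.
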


It is well-known that the moduli space of  del Pezzo surfaces of degree 4 is isomorphic to the moduli space of binary quintics (use the Veronese map to assign to a binary quintic five points in the plane and blow them map to obtain a quartic del Pezzo surface). This gives

\begin{corollary} The coarse moduli space of quartic del Pezzo surfaces  admits a non-trivial birational self-map.
\end{corollary}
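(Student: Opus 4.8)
The plan is to deduce this from the preceding theorem by transporting its non-trivial birational self-map along a birational isomorphism between $\sfM_{\bin}^5$ and the coarse moduli space $\sfM_{\mathrm{dP}_4}$ of quartic del Pezzo surfaces. Recall that a quartic del Pezzo surface is the blow-up of $\bbP^2$ at five points in general position, and that two such surfaces are isomorphic exactly when the corresponding five-point configurations are projectively equivalent. Hence I would first identify $\sfM_{\mathrm{dP}_4}$ with the GIT-quotient $(\bbP^2)^5 /\!/ \PGL(3)$ of five-point configurations in the plane, while $\sfM_{\bin}^5$ is the quotient $(\bbP^1)^{(5)} /\!/ \PGL(2)$, a binary quintic without multiple roots being an unordered set of five points on $\bbP^1$.

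The substance of the argument is the construction of the birational isomorphism $\Psi \colon \sfM_{\bin}^5 \da \sfM_{\mathrm{dP}_4}$. Following the hint, I would use the degree-two Veronese embedding $v \colon \bbP^1 \into \bbP^2$, whose image is a smooth conic $Q$. It sends five points of $\bbP^1$ to five points of $Q$, and these are automatically in general position in $\bbP^2$ (a line meets $Q$ in at most two points, so no three are collinear); blowing them up yields a quartic del Pezzo surface. For the inverse I would invoke the classical fact that five points in general position lie on a unique conic $C$ --- five linear conditions on the six-dimensional space of conics --- which is irreducible, hence smooth and isomorphic to $\bbP^1$; restricting the configuration to $C$ returns five points on $\bbP^1$. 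One then checks that the two assignments are mutually inverse on a dense open set and compatible with the group actions: every automorphism of $Q$ extends to an element of $\PGL(3)$ stabilizing $Q$, giving an embedding $\Aut(Q) \cong \PGL(2) \into \PGL(3)$ under which the two orbit spaces are identified birationally. This is precisely the classical association (Gale duality) between five points in $\bbP^2$ and five points in $\bbP^1$, and either description produces the desired $\Psi$.

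With $\Psi$ in hand the conclusion is formal. Writing $\bar{C}_{3,5} \colon \sfM_{\bin}^5 \da \sfM_{\bin}^5$ for the non-trivial birational self-map furnished by the preceding theorem, the composite $\Psi \circ \bar{C}_{3,5} \circ \Psi^{-1}$ is a birational self-map of $\sfM_{\mathrm{dP}_4}$. Conjugation by a birational isomorphism takes birational maps to birational maps and the identity to the identity, so the resulting self-map is again non-trivial, which proves the corollary.

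I expect the only real obstacle to be the rigorous verification that the Veronese/conic correspondence descends to a genuine birational isomorphism of the two GIT-quotients, rather than merely to a dominant or generically finite map. The dimension count (both quotients are surfaces) and the generic bijectivity of the underlying point-set correspondence are straightforward; the care lies in matching the stability conditions and, above all, in checking that $\PGL(2) = \Aut(Q)$ acting on five points of the conic and $\PGL(3)$ acting on five general points of $\bbP^2$ have the same orbit space up to birational isomorphism. Once that identification is in place, the transport of the self-map is immediate.
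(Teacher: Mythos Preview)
Your proposal is correct and follows essentially the same approach as the paper: identify the moduli space of quartic del Pezzo surfaces with $\sfM_{\bin}^5$ via the Veronese embedding (five points on $\bbP^1$ mapped to five points on a conic in $\bbP^2$, then blown up), and transport the birational self-map $\bar{C}_{3,5}$ along this identification. The paper records this in a single sentence immediately preceding the corollary, whereas you have additionally spelled out the inverse construction via the unique conic through five general points and the compatibility of the $\PGL(2)$- and $\PGL(3)$-actions.
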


I do not know (yet?) any geometric meaning of this self-map.
 
\begin{remark} For any $f\in V(n,d)$ defining a nonsingular hypersurface in $\bbP^{n-1}$, the quotient of $\bbC[t_1,\ldots,t_n]$ by the Jacobian ideal $J(f)$ of $\bbC[t_1,\ldots,t_n]$ generated by partials of $f$ is a finite-dimensional vector space. Let $\He(f)$ be the Hessian form of $f$ of degree $N = n(d-2)$. The Jacobian algebra $\bbC[t_1,\ldots,t_n]/J(f)$ is a graded Gorenstein Artinian algebra with socle spanned by $\He(f)$. For any power 
$l^N$ of a linear form $l\in (\bbC^n)^\vee$, one can write
$$l^N = \as(f)(l)\He(f) \mod J(f),$$
for some homogeneous polynomial $\as(f)$ of degree $N$ on the dual linear space $(\bbC^{n})^\vee$. The function $f\mapsto \as(f)$ on the open subset of $V(n,d)$ of polynomials with non-vanishing discriminant $\Delta$ is called the \emph{associated form}.   Multiplied by some (smallest power) of $\Delta$ the  function $f\mapsto \as(f)$ becomes  a contravariant of order $n(d-2)$ on $V(n,d)$ and some degree. It was introduced and extensively studied by Alexander Isaev and his collaborators (see, for example, \cite{Alper0}).  

In two cases $(n,d) = (2,4),(3,3)$ the degree of the associated form $\as(f)$ coincides with the degree of $f$, and hence defines a rational self-map of the moduli space of binary quartics or cubic curves. It is proven that the degree of the self-map is equal to 1. This gives examples of a birational automorphisms of the moduli spaces $\sfM_{\bin}^4$ and $\E_1$. We refer to \cite{Alper} for some geometrical interpretation of these birational automorphisms.
\end{remark}

\section{Hyperelliptic curves} Recall that a hyperelliptic curve  is a smooth projective curve $C$ of genus $g > 1$ such that it admits a degree 2 finite morphism onto $\bbP^1$. The line bundle $\calL$ defining this map is unique and $\calL^{\otimes g-1}$ is isomorphic to the canonical 
sheaf $\omega_C$. 

A family of hyperelliptic curves is a flat proper morphism $f \colon X\to S$ such that its general fiber is a smooth curve of genus $g > 1$ and there exists a degree 2 finite surjective $S$-map $X\to \bbP(\calE)$, where $\calE$ is a rank 2 locally free sheaf. Two families are equivalent if they are isomorphic over a dense open subset of $S$. Let $\omega_{X/S}$ be the relative canonical sheaf. As in section 2, the sheaf 
$f_*\omega_{X/S}$ is locally free of rank $g$ and defines a morphism $\phi \colon X\to \bbP(f_*\omega_{X/S})$ whose image is 
$D = \bbP(\calE)$ (see \cite{Kleiman}, Theorem 5.5). The branch locus $W_{X/S}$ of $f \colon X\to D$ is a Cartier divisor on $D$ finite and flat  over $S$ 
of relative  degree $2g+2$ (it is called the Weierstrass subscheme in loc.\ cit). The correspondence $(X\to S)\to W_{X/S}$ defines an isomorphism of the moduli problems $\Hyp_g$ of hyperelliptic curves of genus $g$ and 
$\calM_{\bin}^{2g+2}$. 

So, we can define self-maps of $\Hyp_{g}$ by using covariants. However, not every self-map of $\Hyp_g$ or $\calM_{\bin}^{2g+2}$ arises from a covariant. 

\begin{example}  Fix two different points $x_0, x_1$ in the plane, a line $\calL$ through these points, and a nonsingular conic $Q$ tangent to the line $\calL$ at the point $x_1$. Consider the family of cuspidal cubics  $\calC$ with the cusp at $x_0$ and the cuspidal tangent equal to $\calL$.

\begin{figure}[h]
\includegraphics[scale=0.35]{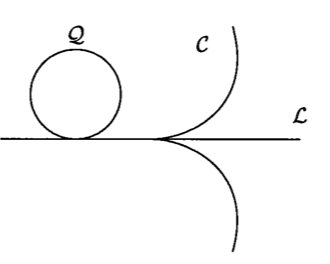}
\end{figure}
 
The cubic $\calC$ cuts out in $Q$ a divisor of degree 6. Identifying $Q$ with $\bbP^1$, we get a point in $\Hyp_2(\bbC)$. In fact, we can extend this construction to families. Now consider the double cover $X\to \bbP^2$ branched over conic $Q$, it is a nonsingular quadric. The preimage 
of $\calL$ is the union of two lines $\calL_1$ and  $\calL_2$ from different rulings intersecting at the preimage of the point $x_1$. The preimage of $\calC$ is a curve $W$ of bidegree $(3,3)$ on $X$ with two cusps $q_1$ and $q_2$, the preimages of $x_0$. Now take the double cover $S\to X$ branched over the 
union $W\cup \calL_1\cup \calL_2$. It is birationally isomorphic to a K3 surface. One can show that a minimal nonsingular model of $S$ admits a 
unique involution $\sigma \colon S\to S$ such that the quotient is a Kummer surface of a curve of genus 2 (see \cite{Appendix}). Also one proves that 
the corresponding self-map of $\sfHyp_2 = \sfM_{\bin}^6$ is of degree 16. An explicit formula for the self-map was found by A. Kumar \cite{Kumar}, p. 22. It shows  that the map is not defined by a covariant. 
\end{example}

One can consider certain natural finite covers of $\sfM_{\bin}^{2g+2}$ which are coarse moduli spaces of hyperelliptic curves with a level 
structure. For example, let $\sfM_{0,d}$ be the Knudsen-Mumford space of stable $d$-pointed curves of genus 0. The space $\sfM_{0,d}$ is birationally isomorphic to the GIT-quotient  $\sfP_1^d = (\bbP^1)^d/\!/\PGL(2)$. Forgetting about the ordering of the $d$ points defines a Galois map $\sfP_1^d\to \sfM_{\bin}^d$ with the Galois group isomorphic to $\frakS_d$. Choosing a subgroup $H$ of $\frakS_d$ we define a cover 
$$\sfM_{\bin}^{d,H}: = \sfP_1^d/H \to \sfM_{\bin}.$$

\begin{example} Let $d = 6$ and let $H\cong (\bbZ/2\bbZ)^3$ be the subgroup of the symmetric group $\frakS_6$ generated by transpositions $(12), (34)$ and $(56)$. 
For any hyperelliptic curve $C$ of genus 2, an order on the set of 6 Weierstrass points defines a full 2-level structure, i.e.\ an isomorphism 
of the vector space $\bbF_2^4$  with fixed symplectic structure and the group $H_1(C,\bbF_2)$ with the standard symplectic form defined by the 
cup-product (see \cite{CAG}, 5.2). We identify $H_1(C,\bbF_2)$ with the $2$-torsion subgroup $\Jac(C)[2]$ of the Jacobian variety $\Jac(C)$. The symplectic form is defined by the Weil-pairing. Thus the moduli space $P_1^6$ is birationally isomorphic to the moduli space $\Hyp_2(2)$ of hyperelliptic curves of genus 2 with a 2-level 
structure. Let $p_1,\ldots,p_6$ be the ramification points of $C\to \bbP^1$. The divisor classes $p_1-p_2, p_3-p_4,p_5-p_6$ define an isotropic plane in $\Jac(C)[2]$ together with a choice of a basis, i.e.\ a map $\bbF_2^2\to \Jac(C)[2]$ with image equal to an isotropic plane. The quotient $\Hyp_2(2)/H$ is the coarse moduli space $\Hyp_2(2)^{0}$ for families of pairs $(C,V)$, where $C$ is a genus 2 curve and $V$ is 
an isotropic plane with a fixed basis. If we enlarge $H$ considering the semi-direct product $H' = H\rtimes \frakS_3$, then the quotient $\sfM_{\bin}^{6,H'}$ becomes the coarse moduli space $\Hyp_2(2)^{\iso}$ of genus 2 curves together with a choice of an isotropic plane in $\Jac(C)[2]$.  The 
cover $\Hyp_2(2)^{\iso}\to \Hyp_2$ is of degree 15. 

The following construction of Friedrich Richelot defines a rational self-map $\sigma$ of degree 1 of $\Hyp_2(2)^{\iso}$. Put 3 unordered pairs 
$\{q_1,q_2\}, \{q_3,q_4\}, \{q_5,q_6\}$  on a nonsingular conic $C$ in the 
plane. They will be viewed as the Weierstrass points of $C$. Draw the lines  $\{\ell_1,\ell_2,\ell_3\}$ through each pair of points. The three lines form a triangle with vertices $\{a_1,a_2,a_3\}$ equal to the intersection points of three pairs of lines. Now draw the three pairs of tangents to $C$ from the points $a_1,a_2,a_3$. The tangency points define 6 unordered points  on $C$ which come with a choice of three unordered pairs. The construction is reversible. Starting with three pairs of points, we draw two tangents at each pair, their intersection points define three 
vertices of a triangle of lines  in the plane. The sides intersect $C$ at three pair of points. The  Richelot map is a generalization of the Gauss' 
arithmetic-geometric mean for curves of genus 1 (see \cite{AGM}). It assigns to a pair $(C,V)$ as above, the quotient $\Jac(C)/V$. It is a principally polarized abelian surface  which comes together with an isotropic plane $V^\perp/V$ in its group of 2-torsion points. One can show 
that is isomorphic to the Jacobian variety of a hyperelliptic curve $C'$ of genus 2.

\begin{figure}[h]
\includegraphics[scale=0.25]{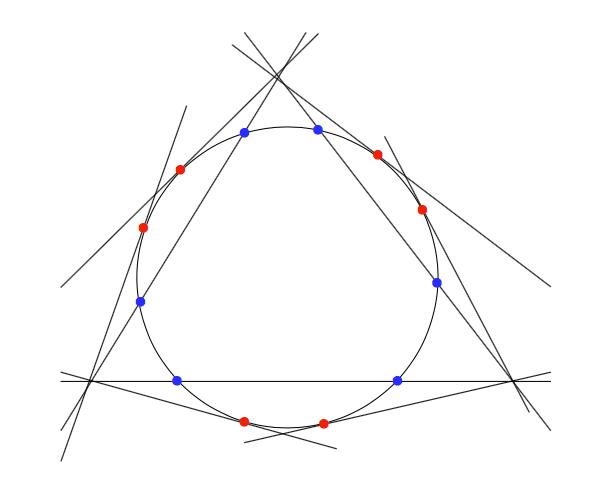}
\end{figure}

\end{example}

\begin{example} Let us consider the moduli space $\sfHyp_2(2)^0$ of ordered triples of pairs of points in $\bbP^1$. In the notation of the previous example, the lines $\ell_1 = \overline{p_1p_2},\ell_2 = \overline{p_3p_4},\ell_3 = \overline{p_5p_6} $ joining the pairs of points now come with an order. Also we can order the vertices of the corresponding triangle, say 
$a_1 = \ell_2\cap \ell_3, a_2 = \ell_1\cap \ell_3, a_3 = \ell_1\cap \ell_2$.  Now we define the following ordered set of 6 points $(q_1,\ldots,q_6)$. \begin{eqnarray*}
q_1=\  \text{the unique point on}\ \ell_1\  \text{such that}\  \{p_1,p_2\}, \{a_1,q_1\}\  \text{are harmonically conjugate},\\
q_2=\  \text{the unique point on}\ \ell_1\  \text{such that}\  \{p_1,p_2\}, \{a_3,q_2\}\  \text{are harmonically conjugate},\\
q_3 =\  \text{the unique point on}\ \ell_2\  \text{such that}\  \{p_3,p_4\}, \{a_3,q_3\}\  \text{are harmonically conjugate},\\
q_4 =\ \text{the unique point on}\ \ell_2 \ \text{such that}\  \{p_3,p_4\}, \{a_2,q_4\}\  \text{are harmonically conjugate},\\
q_5=\   \text{the unique point on}\ \ell_3\  \text{such that}\  \{p_5,p_6\}, \{a_2,q_5\}\  \text{are harmonically conjugate},\\
q_6= \   \text{the unique point on}\ \ell_3\  \text{such that}\  \{p_5,p_6\}, \{a_1,q_6\}\  \text{are harmonically conjugate}.\\
\end{eqnarray*}

\begin{figure}[h]
\includegraphics[scale=0.3]{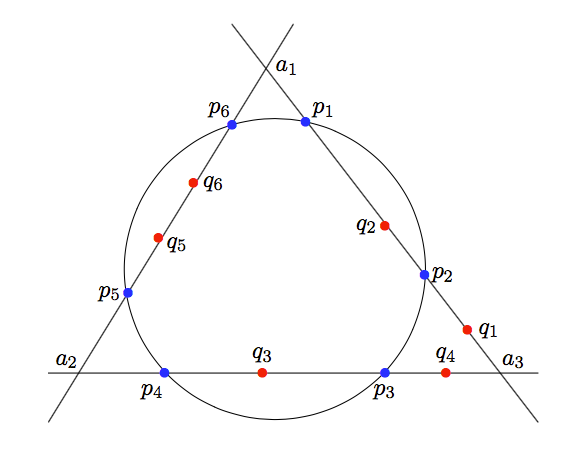}
\end{figure}

Recall that two pairs $\{a,b\}$ and $\{c,d\}$ of points on $\bbP^1$ considered as the zero schemes of binary forms 
$b_1 = \alpha t_0^2+2\beta t_0t_1+\gamma t_1^2, \ b_2 = \alpha' t_0^2+2\beta' t_0t_1+\gamma' t_1^2$ are called \emph{harmonically 
conjugate} if 
$$\alpha\gamma'-2\beta\beta'+\alpha'\gamma = 0.$$
We refer to \cite{CAG} for various equivalent definition of harmonic conjugacy. 
Next, we claim that the points $q_1,\ldots,q_6$ lie on a nonsingular conic. To check this, we may assume that the lines 
$\ell_1,\ell_2,\ell_3$ are the coordinate lines $x = 0, y = 0, z = 0$. Let
$$ax^2+by^2+cz^2+2dxy+2exz+2fyz = 0$$
be the equation of the conic $C$. The pair of points $\{p_1,p_{2}\}$ is on the line $x = 0$ and, in projective coordinates on this line, it is defined by a binary form $by^2+2fyz+cz^2$. The pair $\{a_1,q_1\}$ is defined by some binary form $b'y^2+2f'yz+c'z^2$ satisfying
$bc'+b'c-2ff' = 0$. One of the zeros of this binary form is the point $a_1 = [0,1,0]$. This gives $b' = 0$, $q_1 = [0,-c',2f']$,  
$bc'=2ff'$, hence $q_1 = [0,f,-b]$. Continuing in this way, we obtain
\[
\begin{array}{lll}
q_1 = [0,f,-b], & q_2 = [0,-c,f], & q_3 = [-c,0,e], \\[5pt]
q_4 = [e,0,-a], & q_5 = [d,-a,0], & q_6 = [-b,d,0].
\end{array}
\]
The six points lie on a conic if and only if the following identity holds:
\beq\label{coble}
(123)(145)(246)(356)-(124)(135)(236)(456)(123) = 0,
\eeq
where $(ijk)$ denotes the minor of the following matrix with columns $i,j,k$:
\[
\begin{pmatrix}0&0&-c&e&d&-b\\
f&-c&0&0&-a&d\\
-b&f&e&-a&0&0\end{pmatrix}.
\]
To explain this classical fact, one should fix the first 5 points and vary the sixth one and observe that the identity \eqref{coble} expresses a conic passing through the first 5 points. 

We compute
$$
(123) = -c(f^2-bc),\  (145) = a(be-df),\  (246) = def-abc,\  (356) = e(d^2-ab),$$
$$(124) = e(f^2-bc),\  (135) = def-abc, \  (236) = c(be-df),\  (456) = -a(d^2-ab).$$

Comparing the left-hand side with the right-hand side of \eqref{coble}, we find that they are equal.  

Now, we can define the self-map $\sigma \colon \sfHyp_2(2)^0\da \sfHyp_2(2)^0$ by  assigning to the projective equivalence class of the set $(\{p_1,p_2\}, \{p_3,p_4\},\{p_5,p_6\})$ the projective equivalence class of the set $(\{q_1,q_2\}, \{q_3,q_4\},\{q_5,q_6\})$. Here the projective equivalence for points on $\bbP^1$ is equivalent to the equivalence on $\bbP^2$ with respect to projective transformations leaving the conic invariant. Obviously, $\sigma$ is the composition of a map $\sfHyp_2(2)^0\da \sfHyp_2(2)$ and the forgetting map $\sfHyp_2(2)\da \sfHyp_2(2)^0$ 
of degree 8. Therefore, the degree of $\sigma$ is divisible by 8.

\end{example}

\begin{remark} In \cite{Mukai} S.\ Mukai constructs a biregular self-map of degree 8 of the Igusa compactification of the moduli space $\sfA_2(2)^0$ of principally polarized abelian surfaces $A$ together with a choice of an isotropic plane in $A[2]$. We believe that our map coincides with the Mukai map on the Jacobian locus.
\end{remark}

\section{Plane quartic curves}

Although the moduli problem $\calM_3$ for curves of genus 3 is different from the moduli problem $\mathcal{H}yp_{4,2}$ of plane quartics, their coarse moduli spaces $\sfM_3$ and the GIT-quotient $\Hyp_{4,2}/\PGL(3) = \bbP(V(3,4))/\!/\PGL(3)$ are birationally equivalent. Thus, we may view a self-map of $\sfM_3$ as a self-map of the latter moduli space.

There are many sources that discuss a construction of a birational map from the GIT-quotients $\sfM_3 = \bbP(V(3,4))/\!/\PGL(3)$ of the space of plane quartic curves to its cover $\sfM_3^{\ev}$ of degree 36 defined by a choice of an even theta characteristic (see, for example, \cite{CAG}, 6.3). The map is called the \emph{Scorza map}. Composing this map with the forgetful map $\sfM_3^{\ev}\to \sfM_3$, we obtain a rational self-map of $\sfM_3$ of degree 36.

The Scorza map is defined by the Clebsch covariant $\sfC_{4,4}$ of degree 4 and order 4 on the space $V(3,4)$ of ternary quartics. For any quartic 
$f\in V(3,4)$, the quartic $C_{4,4}(f)$ vanishes   on the locus of points such that the polar cubic with respect to this point lies in the zero locus of the Aronhold invariant $\sfS$. It is known that the $\SL(3)$-invariants of lowest degree on the space $V(3,4)$ are proportional to a cubic invariant $\sfI_3$. Multiplying it by the identity covariant $\sfU$, we get another covariant of degree 4 and order 4. This defines  a 2-dimensional vector space of covariants of degree 4 and order 4 and all covariants of degree 4 are obtained in this way (see \cite{Pascal}, Kapitel XVIII, \S 3).  

We denote by $C_{m,r}^*$ a contravariant on the space $V(n,d)$ of class $r$ and degree $m$ in the coefficients. The contravariant of lowest degree in coefficients of a ternary quartic form is the Clebsch contravariant $\Omega_{2,4}$ (see loc. cit and \cite{Salmon}, p. 264). It assigns to a ternary form $F$ of degree $4$ the ternary form of degree 4 in dual coordinates that vanishes on the set of lines that intersect the curve $F = 0$ at the union of two pairs of harmonically conjugate points. In other words, it means that the restriction of $F$ to the line is a binary form of degree $4$ on which the basic invariant  of degree $2$ vanishes. If we plug in the equation of a general line $L=\alpha x+\beta y +\gamma z = 0$ in the form $F(x,y,z)$ of degree $4$, we obtain a quartic binary form $F_L(x,y;\alpha,\beta,\gamma)$ of degree $4$ in $\alpha,\beta,\gamma$ and  of degree 1 in the coefficients of $F$. Evaluating the degree 2 invariant $I_2$ on this form we eliminate $x,y$ and obtain a quartic form in the dual coordinates $(\alpha,\beta,\gamma)$ of degree 2 in coefficients of $F$.

The following result based on ingenious computer computations was communicated to me by Damiano Testa.

\begin{theorem}[D. Testa] The degree of the rational self map  of $\sfM_3$ defined by the Salmon contravariant $\Omega_{2,4}$ is equal to 15.
\end{theorem}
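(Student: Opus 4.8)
The plan is to first confirm that the self-map $\bar\Omega$ induced by $\Omega_{2,4}$ is dominant, so that its degree is a well-defined finite number, and then to compute that number by converting the $\PGL(3)$-orbit-equivalence problem into an explicit rational self-map of an affine space of dimension $6=\dim\sfM_3$. For dominance it suffices to exhibit one point where the differential is an isomorphism: pick a smooth quartic $F_0$ with trivial automorphism group, so that near $[F_0]$ the quotient $\pi\colon\bbP(V(3,4))\da\sfM_3$ is a principal $\PGL(3)$-bundle, and check that the Jacobian of $\bar\Omega$ at $[F_0]$ has rank $6$. This is a single finite computation and, being an open condition, it also guarantees that a general point has only finitely many preimages.

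The core of the argument is to represent $\bar\Omega$ in coordinates. Since a general plane quartic has no automorphisms, the $8$-dimensional group $\PGL(3)$ can be used to normalize $8$ of the $14$ projective coefficients, yielding a rational section $s\colon\sfM_3\da\bbP(V(3,4))$, i.e.\ a $6$-parameter normal form $F(\mathbf u)$, $\mathbf u\in\bbA^6$, meeting a general orbit in a single point. I would then compute $\Omega_{2,4}(F(\mathbf u))$, whose $15$ coefficients are quadratic in the coefficients of $F$ and therefore explicit polynomials in $\mathbf u$; identifying the resulting dual quartic with an ordinary quartic via the fixed dual bases identifying $V(3,4)^\vee$ with $V(3,4)$ (an identification $\iota$, as in Section 3), one obtains a concrete quartic $\iota\,\Omega_{2,4}(F(\mathbf u))$. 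The final step is to find the group element $g(\mathbf u)\in\PGL(3)$ returning this quartic to the chosen normal form, which produces the new parameters $\mathbf v(\mathbf u)$ and hence an explicit dominant rational self-map $\Psi\colon\bbA^6\da\bbA^6$ with $\deg\Psi=\deg\bar\Omega$. The degree is then read off as the number of solutions $\mathbf u$ of $\Psi(\mathbf u)=\mathbf v_0$ for general $\mathbf v_0$, computed by elimination (a Gröbner basis or iterated resultants), or equivalently as $\dim_{\bbC}\bbC[\mathbf u]/(\Psi_1-v_{0,1},\dots,\Psi_6-v_{0,6})$; the claim is that this number is $15$.

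The main obstacle is the renormalization step together with the final elimination. Making $g(\mathbf u)$ a rational function of $\mathbf u$ requires a normal form whose reduction is rational, and one must keep track of the residual finite symmetry group of that normal form: if the section $s$ is only generically $k$-to-one onto its image rather than birational, the naive solution count is off by the factor $k$, and getting this bookkeeping right is exactly what separates the answer $15$ from a spurious multiple of it. Compounding this, after substituting the $6$-parameter normal form the entries of $\Omega_{2,4}$ acquire high degree in $\mathbf u$, so the Gröbner/resultant computation in six variables suffers from severe coefficient growth; the practical remedy, and presumably the ingenious part of Testa's computation, is to specialize $\mathbf v_0$ to random rational values and to work modulo a large prime, then certify that the count obtained is the generic one.

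As an independent check I would restrict to a $\PGL(3)$-stable subfamily on which $\Omega_{2,4}$ diagonalizes, such as a family of quartics invariant under a fixed finite subgroup of $\PGL(3)$ (e.g.\ Ciani or Klein-type quartics), where the contravariant and the reduction to normal form can be computed by hand; this yields a self-map of a lower-dimensional moduli space whose degree constrains $\deg\bar\Omega$ and serves as a sanity test, though not a proof, since preimages of a point of the subfamily may lie off it. It is worth noting that the value $15$ also appeared above as the degree of the cover $\Hyp_2(2)^{\iso}\to\Hyp_2$, which hints that the genus-$3$ self-map may likewise acquire a modular interpretation in terms of isotropic planes or theta characteristics once a geometric description of the fibers of $\Omega_{2,4}$ is found.
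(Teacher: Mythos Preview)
The paper does not actually supply a proof of this theorem: it states only that the result is ``based on ingenious computer computations'' communicated by Testa, and gives no further details. There is therefore nothing in the paper to compare your argument against at the level of mathematical strategy.

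That said, your outline is a plausible reconstruction of what such a computation must look like: pass to a $6$-parameter slice of $\bbP(V(3,4))$ transverse to generic $\PGL(3)$-orbits, evaluate $\Omega_{2,4}$ there, renormalize, and count preimages of a generic point by elimination (in practice over a finite field). Your caveats about the bookkeeping---keeping track of the residual automorphisms of the chosen normal form so as not to overcount by a constant factor, and the coefficient blow-up forcing modular arithmetic---are exactly the practical issues one would expect, and are presumably what the word ``ingenious'' is pointing at. One comment: rather than constructing a global rational section $s\colon\sfM_3\da\bbP(V(3,4))$, it is often simpler in such computations to fix a single generic target quartic $G$, parametrize its $\PGL(3)$-orbit, and intersect the preimage $\Omega_{2,4}^{-1}(\PGL(3)\cdot G)$ with a generic affine slice; this sidesteps the question of whether the normal form is birational onto its image. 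But this is a matter of implementation, not of principle, and your plan is sound.
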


Unfortunately, we do not know whether the coincidence of the number 15 with the number of non-trivial $2$-torsion divisor classes on a curve of genus 3 has anything to do with the construction of this self-map.

Another interesting contravariant of class $4$ and degree 5 is the \emph{Ciani contravariant} $\Omega_{5,4}$ (see the geometric interpretation of this contravariant in \cite{DK}, pp. 274--275). Together with the product $\sfI_3\Omega_{2,4}$, where $\sfI_3$ is an invariant of degree $3$, they span a 2-dimensional linear space of contravariants of degree 5 and class 4. 

\begin{remark} Suppose we have a one-parameter irreducible family $C_{r,m}(t), t\in T,$ of covariants  of the same degree and order. Then they define a rational map 
$$c:V(n,d)\times T\dasharrow V(n,d)\times T, \quad (B,t)\mapsto (C_{r,m}(t)(B),t).$$
Suppose $C_{r,m}(t_0)$ is dominant of some degree $k$. Since the source is irreducible, the image of the map $c$ is irreducible and contains an open subset in a hypersurface $V(n,d)\times \{t_0\}$ in the target space. Since the projection of the image to $T$ is dominant, this easily implies that the map $c$ is dominant. Since the dimension of fibers of the image over $T$ can only increase, we obtain that all covariants $C_{r,m}(t)$ are dominant.
The same argument applies to a family of contravariants.

Now suppose that we have an invariant $I_r$ of some degree $r$ and a covariant $C_{r+1,d}$ of degree $r+1$ and order $d$. Then 
$I_r\sfU$ is a covariant of degree $r+1$ and order $d$, obviously dominant. By the argument from above, we obtain that $C_{r+1,d}$ is dominant. 

 For example, we have a pencil of covariants of degree $4$ and order $4$ spanned by the Clebsch 
covariant $\sfC_{4,4}$ and the covariant $\sfI_3\sfU$, where $\sfI_3$ is a basic invariant of degree $3$. This gives a one-dimensional family of dominant self-maps of $\sfM_3$. Does any of them besides the  Clebsch covariant and the identity invariant have a geometric meaning? 

\end{remark}

\section{Acknowlegments} I thank  Maxim Fedorchuk and  Damiano Testa for interesting discussions on the subject of the article. 

\bibliographystyle{plain}

\end{document}